\documentclass[12pt]{amsart}

\title[Set-theoretic mereology]{Set-theoretic mereology}

\author[Hamkins]{Joel David Hamkins}
 \address[J.~D.~Hamkins]
         {Mathematics, Philosophy, Computer Science, The Graduate Center of The City University of New York,
         365 Fifth Avenue, New York, NY 10016 \&
         Mathematics, College of Staten Island of CUNY, Staten Island, NY 10314}
 \email{jhamkins@gc.cuny.edu}
 \urladdr{http://jdh.hamkins.org}
 \thanks{This work was initiated at the conference on Computability Theory and the Foundations of Mathematics (CTFM) at the Tokyo Institute of Technology, held in honor of Kazuyuki Tanaka in September 2015. The first author is extremely grateful to his Japanese hosts, including Toshimichi Usuba, for supporting his research visit there. The research of the first author has also been supported in part by the Simons Foundation (grant 209252). Commentary concerning this paper can be made at \href{http://jdh.hamkins.org/set-theoretic-mereology}{jdh.hamkins.org/set-theoretic-mereology}.}

\author[Kikuchi]{Makoto Kikuchi}
 \address[M.~Kikuchi]
         {Graduate School of System Informatics, Kobe University, Rokkodai, Nada, Kobe 657-8501, Japan}
 \email{mkikuchi@kobe-u.ac.jp}
 \urladdr{http://www2.kobe-u.ac.jp/~mkikuchi/index-e.html}

%
%
%
%
\newtheorem{theorem}{Theorem}

\newtheorem*{maintheorem*}{Main Theorem}
\newtheorem*{maintheorems*}{Main Theorems}
\newtheorem{corollary}[theorem]{Corollary}

\newtheorem{question}[theorem]{Question}

\newtheorem*{questions*}{Questions}
\newtheorem*{mainquestion*}{Main Question} 
\newtheorem*{openquestion*}{Open Question} 

\newcommand{\QED}{\end{proof}}

\def\proclaim[#1]{{\bf #1}}
\def\BF#1.{{\bf #1.}}

%
%

\newcommand\Ersov{Er\v sov}

\newcommand{\Godel}{G\"odel}

\newcommand{\Lowenheim}{L\"owenheim}

%
%


\newcommand{\N}{{\mathbb N}}



%
%

%
%

%
%

\newcommand{\of}{\subseteq}

\newcommand{\ofneq}{\subsetneq}

\newcommand{\singleton}[1]{\left\{{#1}\right\}}

\newcommand{\elesub}{\prec}

\newcommand{\Con}{\mathop{{\rm Con}}}
\newcommand{\image}{\mathbin{\hbox{\tt\char'42}}}

\newcommand{\satisfies}{\models}



\newcommand{\union}{\cup}

\newcommand{\intersect}{\cap}
\newcommand{\Intersect}{\bigcap}

\newcommand{\smalllt}{\mathrel{\mathchoice{\raise2pt\hbox{$\scriptstyle<$}}{\raise1pt\hbox{$\scriptstyle<$}}{\raise0pt\hbox{$\scriptscriptstyle<$}}{\scriptscriptstyle<}}}
\newcommand{\smallleq}{\mathrel{\mathchoice{\raise2pt\hbox{$\scriptstyle\leq$}}{\raise1pt\hbox{$\scriptstyle\leq$}}{\raise1pt\hbox{$\scriptscriptstyle\leq$}}{\scriptscriptstyle\leq}}}

\newcommand{\boolval}[1]{\mathopen{\lbrack\!\lbrack}\,#1\,\mathclose{\rbrack\!\rbrack}}
\def\[#1]{\boolval{#1}}
\newbox\gnBoxA
\newdimen\gnCornerHgt
\setbox\gnBoxA=\hbox{\tiny$\ulcorner$}
\global\gnCornerHgt=\ht\gnBoxA
\newdimen\gnArgHgt
\def\gcode #1{%
\setbox\gnBoxA=\hbox{$#1$}%
\gnArgHgt=\ht\gnBoxA%
\ifnum     \gnArgHgt<\gnCornerHgt \gnArgHgt=0pt%
\else \advance \gnArgHgt by -\gnCornerHgt%
\fi \raise\gnArgHgt\hbox{\tiny$\ulcorner$} \box\gnBoxA %
\raise\gnArgHgt\hbox{\tiny$\urcorner$}}
\newcommand{\UnderTilde}[1]{{\setbox1=\hbox{$#1$}\baselineskip=0pt\vtop{\hbox{$#1$}\hbox to\wd1{\hfil$\sim$\hfil}}}{}}
\newcommand{\Undertilde}[1]{{\setbox1=\hbox{$#1$}\baselineskip=0pt\vtop{\hbox{$#1$}\hbox to\wd1{\hfil$\scriptstyle\sim$\hfil}}}{}}
\newcommand{\undertilde}[1]{{\setbox1=\hbox{$#1$}\baselineskip=0pt\vtop{\hbox{$#1$}\hbox to\wd1{\hfil$\scriptscriptstyle\sim$\hfil}}}{}}
\newcommand{\UnderdTilde}[1]{{\setbox1=\hbox{$#1$}\baselineskip=0pt\vtop{\hbox{$#1$}\hbox to\wd1{\hfil$\approx$\hfil}}}{}}
\newcommand{\Underdtilde}[1]{{\setbox1=\hbox{$#1$}\baselineskip=0pt\vtop{\hbox{$#1$}\hbox to\wd1{\hfil\scriptsize$\approx$\hfil}}}{}}

\renewcommand{\implies}{\mathrel{\rightarrow}}

\renewcommand{\iff}{\mathrel{\longleftrightarrow}}

\def\<#1>{\left\langle#1\right\rangle}


\newcommand{\ZFC}{{\rm ZFC}}


\newcommand{\HF}{{\rm HF}}

%
%

\newcommand{\cell}[1]{\boxit{\hbox to 17pt{\strut\hfil$#1$\hfil}}}
\newcommand{\head}[2]{\lower2pt\vbox{\hbox{\strut\footnotesize\it\hskip3pt#2}\boxit{\cell#1}}}
\newcommand{\boxit}[1]{\setbox4=\hbox{\kern2pt#1\kern2pt}\hbox{\vrule\vbox{\hrule\kern2pt\box4\kern2pt\hrule}\vrule}}
\newcommand{\Col}[3]{\hbox{\vbox{\baselineskip=0pt\parskip=0pt\cell#1\cell#2\cell#3}}}
\newcommand{\tapenames}{\raise 5pt\vbox to .7in{\hbox to .8in{\it\hfill input: \strut}\vfill\hbox to
.8in{\it\hfill scratch: \strut}\vfill\hbox to .8in{\it\hfill output: \strut}}}
\newcommand{\Head}[4]{\lower2pt\vbox{\hbox to25pt{\strut\footnotesize\it\hfill#4\hfill}\boxit{\Col#1#2#3}}}
\newcommand{\Dots}{\raise 5pt\vbox to .7in{\hbox{\ $\cdots$\strut}\vfill\hbox{\ $\cdots$\strut}\vfill\hbox{\
$\cdots$\strut}}}
%
%
%
%
\newcommand{\df}{\it} 
\hyphenation{su-per-com-pact-ness}
\hyphenation{La-ver}
\hyphenation{anti-ci-pat-ing}

\usepackage[hidelinks]{hyperref}
\usepackage{latexsym,amsfonts,amsmath,amssymb}
\usepackage{tikz} 
\usetikzlibrary{arrows}
\usepackage{diagrams}\diagramstyle[tight,centredisplay,textflow]

\begin{document}

\begin{abstract}
 We consider a set-theoretic version of mereology based on the inclusion relation~$\of$ and analyze how well it might serve as a foundation of mathematics. After establishing the non-definability of $\in$ from $\of$, we identify the natural axioms for $\of$-based mereology, which constitute a finitely axiomatizable, complete, decidable theory. Ultimately, for these reasons, we conclude that this form of set-theoretic mereology cannot by itself serve as a foundation of mathematics. Meanwhile, augmented forms of set-theoretic mereology, such as that obtained by adding the singleton operator, are foundationally robust.
\end{abstract}

\maketitle

\noindent
In light of the comparative success of membership-based set theory in the foundations of mathematics, since the time of Cantor, Zermelo and Hilbert, a mathematical philosopher naturally wonders whether one might find a similar success for mereology, based upon a mathematical or set-theoretic parthood relation rather than the element-of relation~$\in$. Can set-theoretic mereology serve as a foundation of mathematics? And what should be the central axioms of set-theoretic mereology?

We should like therefore to consider a mereological perspective in set theory, analyzing how well it might serve as a foundation while identifying the central axioms. Although set theory and mereology, of course, are often seen as being in conflict, what we take as the project here is to develop and investigate, within set theory, a set-theoretic interpretation of mereological ideas. Mereology, by placing its focus on the parthood relation, seems naturally interpreted in set theory by means of the inclusion relation~$\of$, so that one set $x$ is a {\df part} of another $y$, just in case $x$ is a subset of $y$, written $x\of y$. This interpretation agrees with David Lewis's~\cite{Lewis1991:PartsOfClasses} interpretation of set-theoretic mereology in the context of sets and classes (see also~\cite{Hellman2009:MereologyInPhilosophyOfMathematics}), but we restrict our attention to the universe of sets. So in this article we shall consider the formulation of set-theoretic mereology as the study of the structure $\<V,\of>$, which we shall take as the canonical fundamental structure of set-theoretic mereology, where $V$ is the universe of all sets; this is in contrast to the structure $\<V,{\in}>$, usually taken as central in set theory. The questions are: How well does this mereological structure serve as a foundation of mathematics? Can we faithfully interpret the rest of mathematics as taking place in $\<V,\of>$ to the same extent that set theorists have argued (with whatever degree of success) that one may find faithful representations in $\<V,{\in}>$? Can we get by in mathematics with merely the subset relation~$\of$ in place of the membership relation~$\in$?

Ultimately, we shall identify grounds supporting generally negative answers to these questions. On the basis of various mathematical results, our main philosophical thesis will be that the particular understanding of set-theoretic mereology via the inclusion relation~$\of$ cannot adequately serve by itself as a foundation of mathematics. Specifically, theorem~\ref{Theorem.DifferentInSameOf} and corollary~\ref{Corollary.InNotDefinableFromOf} show that~$\in$ is not definable from~$\of$, and we take this to show that one may not interpret membership-based set theory itself within set-theoretic mereology in any straightforward, direct manner. A counterpoint to this is provided by theorem~\ref{Theorem.IfInclusionSameThenInIsIsomorphic}, however, which identifies a weak sense in which~$\of$ may identify~$\in$ up to definable automorphism of the universe. That counterpoint is not decisive, however, in light of question~\ref{Question.NonIsomorphicInButSameSubset} and its resolution by theorem~\ref{Theorem.DifferentTheoriesSameOf}, which shows that set-theoretic mereology does not actually determine the~$\in$-isomorphism class or even the~$\in$-theory of the~$\in$-model in which it arises. For example, we cannot determine in $\of$-based set-theoretic mereology whether the continuum hypothesis holds or fails, whether the axiom of choice holds or fails or whether there are large cardinals or not. Initially, theorem~\ref{Theorem.MereologyIsDecidable} may appear to be a positive result for mereology, since it identifies precisely what are the principles of set-theoretic mereology, considered as the theory of $\<V,{\of}>$. Namely,~$\of$ is an atomic unbounded relatively complemented distributive lattice, and this is a finitely axiomatizable complete theory. So in a sense, this theory simply \emph{is} the theory of $\of$-based set-theoretic mereology. But upon reflection, since every finitely axiomatizable complete theory is decidable, the result actually appears to be devastating for set-theoretic mereology as a foundation of mathematics, because a decidable theory is much too simple to serve as a foundational theory for all mathematics. The full spectrum and complexity of mathematics naturally includes all the instances of many undecidable decision problems and so cannot be founded upon a decidable theory. Finally, corollary~\ref{Corollary.HFElementary} shows that the structure consisting of the hereditarily finite sets under inclusion forms an elementary substructure of the full set-theoretic mereological universe
 $$\<\HF,\of>\elesub\<V,\of>.$$
Consequently, set-theoretic mereology cannot properly treat or even express the various concepts of infinity that arise in mathematics.

Let us briefly clarify the foundational dialectic of this article. We study set-theoretic mereology within set theory itself, studying $\<V,{\of}>$ as it is defined in set theory, by working in \ZFC, for example, although much weaker set theories would suffice for our analysis. Every model $\<M,\in^M>$ of $\ZFC$ gives rise to the associated canonical model $\<M,{\of^M}>$ of set-theoretic mereology. So we make claims about~$\of$ when we are able to prove them in our~$\in$-based set theory. If a mereologist desires instead to work axiomatically purely within set-theoretic mereology itself, then in light of theorem~\ref{Theorem.MereologyIsDecidable} there will be no disagreement on the fundamental mereological truths, if $\of$ is regarded as an atomic unbounded relatively complemented distributive lattice, since this is a complete theory. To be sure, some of the axiomatizations of mereology that have been proposed in the literature do not agree with those axioms, particularly on the issue of atomicity as we discuss in section~\ref{Section.MereologyWithSingltonOperator}; but to the extent that they disagree with those elementary set-theoretic properties of inclusion $\of$, we regard them as concerned with another kind of mereology and not with $\of$-based set-theoretic mereology, the topic on which we are focussing. In our membership-based set theory, meanwhile, we argue that there are substantive mathematical concepts and truths that are not captured by the mereological theory of~$\of$, and for this reason, it does not serve adequately as a foundation of mathematics.

Let us also briefly remark on the distinction between the parthood relation and the proper parthood relation, as some mereologists prefer to take the proper-part relation as fundamental, rather than the parthood relation. In set theory, this distinction amounts to the distinction between the inclusion relation~$\of$, which is reflexive, and the proper-subset relation $\ofneq$, which is irreflexive. These two relations, however, are easily interdefinable in the first-order language of set theory as follows:
 $$x\of y\quad\longleftrightarrow\quad x\ofneq y\vee x=y$$
 $$x\ofneq y\quad\longleftrightarrow\quad x\of y\wedge x\neq y.$$
For our conclusions, therefore, it does not seem to matter which of these relations we consider to be fundamental. In particular,~$\in$ is not definable from either of them, and set-theoretic mereology is decidable, whether one is considering the theory of $\<V,\of>$ or of $\<V,\ofneq>$. Henceforth, therefore, we shall without loss of generality focus on the reflexive relation~$\of$.

Finally, although we argue in this article that the particular formulation of set-theoretic mereology interpreted via the inclusion relation~$\of$ cannot by itself serve adequately as a foundation of mathematics, nevertheless we should like to remark that there may be alternative mereological perspectives in set theory, using a different interpretation of the parthood relation, that do allow it to serve as a suitable foundation of mathematics. For example, theorem~\ref{Theorem.MereologyWithSingletonOperator} shows that if one augments set-theoretic mereology with the singleton operator $a\mapsto\{a\}$, then it becomes interdefinable with~$\in$-based set theory, and therefore just as adequate as~$\in$ in foundations. Meanwhile, there are still other interpretations of mereology in set theory. See \cite{Varzi2015:Mereology} for a general survey of mereology.

\section{Non-definability of~$\in$ from~$\of$}

Our initial task, of course, is to settle the question of whether the two set-theoretic relations~$\in$ and~$\of$ might be definable from one another or otherwise bi-interpretable in set theory. For if the two relations were interdefinable, then we would reasonably see them as fundamentally equivalent in their capacity to serve as a foundation of mathematics, since either would serve as a foundation for the other, and this would settle the entire issue. Furthermore, elementary classical set theory already provides one direction, since~$\of$ is easily definable from~$\in$ in set theory via
 $$x\of y\quad\longleftrightarrow\quad \forall z\,(z\in x\implies z\in y).$$
So the question really is whether one may conversely define~$\in$ from~$\of$. At the CTFM 2015 in Tokyo, the second author specifically asked for a counterexample model:

\begin{question}[Kikuchi]
 Can there be two models of set theory with different membership relations, but the same inclusion relation?
\end{question}

More specifically, he asks for models of set theory $\<V,{\in}>$ and $\<V,{\in^*}>$ on the same underlying universe of sets $V$, with different membership relations $\in\neq\in^*$, but for which the correspondingly defined inclusion relations~$\of$ and $\of^*$, respectively, are identical. The answer is yes, and indeed, every model of set theory has many such alternative membership relations with the same inclusion relation:

\begin{theorem}\label{Theorem.DifferentInSameOf}
In any universe of set theory $\<V,{\in}>$, there is a definable relation $\in^*$, different from~$\in$, such that $\<V,{\in^*}>$ is a model of set theory, in fact isomorphic to the original universe $\<V,{\in}>$, for which the corresponding inclusion relation $$u\subseteq^* v\quad\longleftrightarrow\quad \forall a\, (a\in^* u\to a\in^* v)$$ is identical to the usual inclusion relation $u\subseteq v$.
\end{theorem}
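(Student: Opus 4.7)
The plan is to build $\in^*$ by transporting $\in$ along a nontrivial definable permutation $\pi$ of $V$ that happens to be an automorphism of $\langle V,\subseteq\rangle$. Concretely, I would first fix any definable non-identity bijection $\tau\colon V\to V$, for example the transposition swapping $\emptyset$ and $\{\emptyset\}$ and fixing everything else. Then I would define $\pi(x)=\tau[x]=\{\tau(y):y\in x\}$ and note that $\pi$ is a bijection of $V$ with inverse $\pi^{-1}(x)=\tau^{-1}[x]$, because $\tau$ is. I expect the main preparatory observation to be that $\pi$ is an automorphism of the inclusion structure: $x\subseteq y$ iff $\tau[x]\subseteq\tau[y]$, using injectivity of $\tau$ for the nontrivial direction.

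Next, I would define the alternative membership by transporting $\in$ through $\pi$, setting
\[
 a\in^* b\quad\longleftrightarrow\quad \pi^{-1}(a)\in\pi^{-1}(b).
\]
By construction $\pi\colon\langle V,\in\rangle\to\langle V,\in^*\rangle$ is an isomorphism, so $\langle V,\in^*\rangle$ is a model of \ZFC\ isomorphic to the original universe. For the crucial step I would compute $\subseteq^*$ from $\in^*$: since $\pi^{-1}$ is a bijection of $V$, substituting $d=\pi^{-1}(c)$ gives
\[
 a\subseteq^* b \iff \forall c\,(\pi^{-1}(c)\in\pi^{-1}(a)\to\pi^{-1}(c)\in\pi^{-1}(b)) \iff \pi^{-1}(a)\subseteq\pi^{-1}(b),
\]
and then the $\subseteq$-automorphism property of $\pi^{-1}$ collapses the right-hand side to $a\subseteq b$, as desired.

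Finally, I would verify $\in^*\neq\in$ by exhibiting a concrete witness, for instance by checking that $\emptyset\in\{\emptyset\}$ while $\emptyset\notin^*\{\emptyset\}$: unpacking, $\emptyset\in^*\{\emptyset\}$ demands $\pi^{-1}(\emptyset)\in\pi^{-1}(\{\emptyset\})$, and $\pi^{-1}(\{\emptyset\})=\{\tau^{-1}(\emptyset)\}=\{\{\emptyset\}\}$, which does not contain $\emptyset$.

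The only real obstacle is the inclusion computation, and it rests entirely on choosing $\pi$ to be a $\subseteq$-automorphism; once that is in hand, everything else is a short bijection calculation. The underlying point that makes it all go through is structural: an automorphism of $\langle V,\subseteq\rangle$ is determined by an arbitrary permutation of the singletons (the atoms of the inclusion lattice) extended by pointwise image, and any nontrivial definable permutation of $V$ yields such an automorphism and hence a witness $\in^*\neq\in$ with $\subseteq^*=\subseteq$.
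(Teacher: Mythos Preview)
Your proposal is correct and is essentially the paper's own argument: fix a definable non-identity permutation of $V$, pass to its pointwise-image map to obtain a $\subseteq$-automorphism, and transport $\in$ along that automorphism. The only cosmetic differences are that your $\tau,\pi$ play the roles of the paper's $\theta,\tau$, and you transport via $\pi^{-1}$ rather than $\pi$ (equivalently, your isomorphism goes $\langle V,\in\rangle\to\langle V,\in^*\rangle$ rather than the reverse), which is immaterial.
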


\begin{proof}
The result requires very little about the theory of $\<V,{\in}>$, and even extremely weak set theories suffice. Let $\theta:V\to V$ be any definable non-identity permutation of the universe, such as the permutation that swaps $\emptyset$ and $\{\emptyset\}$ and leaves all other sets unchanged. Let $\tau:u\mapsto \theta\image u=\{\ \theta(a)\mid a\in u\ \}$ be the function determined by pointwise image under $\theta$. Since $\theta$ is bijective and definable, it follows that $\tau$ is also a bijection of $V$ to $V$, since every set is the $\theta$-image of a unique set. Furthermore, $\tau$ is an automorphism of $\<V,\subseteq>$, since $$u\subseteq v\quad\iff\quad\theta\image u\subseteq\theta\image v\quad\iff\quad\tau(u) \subseteq\tau(v).$$
The first author had used this idea a few years ago in~\cite{Hamkins2013.MO144236:IsTheInclusionVersionOfTheKunenInconsistencyTheoremTrue?} in order to prove that there are always many nontrivial $\subseteq$-automorphisms of the set-theoretic universe, as expressed in corollary~\ref{Corollary.NontrivialAutomorphismsOfOf}. Note that since $\tau(\{a\})=\{\theta(a)\}$, it follows that any instance of nontriviality $\theta(a)\neq a$ in $\theta$ leads immediately to an instance of nontriviality in $\tau$.

Using the map $\tau$, we define $a\in^* b\iff\tau(a)\in\tau(b)$. By definition, therefore, and since $\tau$ is bijective, it follows that $\tau$ is an isomorphism of the structures $\<V,\in^*>\cong\<V,\in>$, and so in particular, $\<V,{\in^*}>$ has the same theory as $\<V,{\in}>$, making it just as much a model of set theory as $\<V,{\in}>$ is. Let us show next that $\in^*\neq \in$. Since $\theta$ is nontrivial, there is an~$\in$-minimal set $a$ with $\theta(a)\neq a$ (one can take $a=\emptyset$ for the particular $\theta$ that we provided above). By minimality, $\theta\image a=a$ and so $\tau(a)=a$. But as mentioned, $\tau(\{a\})=\{\theta(a)\}\neq\{a\}$. So we have $a\in\{a\}$, but $\tau(a)=a\notin\{\theta(a)\}=\tau(\{a\})$ and hence $a\notin^*\{a\}$. So the two relations~$\in$ and $\in^*$ are different.

Meanwhile, consider the corresponding subset relation. Specifically, $u\subseteq^* v$ is defined to mean $\forall a\,(a\in^* u\to a\in^* v)$, which holds if and only if $\forall a\, (\tau(a)\in\tau(u)\to \tau(a)\in\tau(v))$; but since $\tau$ is surjective, this holds if and only if $\tau(u)\subseteq \tau(v)$, which as we observed at the beginning of the proof, holds if and only if $u\subseteq v$. So the corresponding subset relations $\subseteq^*$ and $\subseteq$ are identical, as desired.
\end{proof}

\begin{corollary}[\cite{Hamkins2013.MO144236:IsTheInclusionVersionOfTheKunenInconsistencyTheoremTrue?}]
\label{Corollary.NontrivialAutomorphismsOfOf}
 Set-theoretic mereology is not rigid. That is, in every model of set theory $\<V,{\in}>$, there are numerous nontrivial definable automorphisms of the inclusion relation $\tau:\<V,\of>\cong\<V,\of>$.
\end{corollary}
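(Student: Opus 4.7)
The plan is to harvest the corollary directly from the construction inside the proof of Theorem~\ref{Theorem.DifferentInSameOf}. There, for any definable non-identity permutation $\theta:V\to V$, the map $\tau:u\mapsto\theta\image u$ was shown to be a definable bijection of $V$ satisfying $u\subseteq v\iff\tau(u)\subseteq\tau(v)$. That biconditional is precisely the statement that $\tau$ is an automorphism of $\<V,\of>$, and $\tau$ is definable because $\theta$ is. So each definable non-identity permutation of $V$ at once supplies a definable $\of$-automorphism.

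Next I would verify that each such $\tau$ is nontrivial. The key observation, already noted in the proof of Theorem~\ref{Theorem.DifferentInSameOf}, is that $\tau(\{a\})=\{\theta(a)\}$. Hence whenever $\theta(a)\neq a$ we obtain $\tau(\{a\})\neq\{a\}$, so any nontriviality of $\theta$ transfers at once to nontriviality of $\tau$.

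To justify the word \emph{numerous}, I would then exhibit a proper class of distinct definable non-identity permutations. The simplest device is to note that for any two definable distinct sets $a\neq b$, the transposition $\theta_{a,b}$ swapping $a$ and $b$ and fixing everything else is definable from parameters $a,b$; as the parameters range over $V$ we obtain class-many such $\theta$'s. Since distinct $\theta$'s yield distinct $\tau$'s (if $\theta_1(a)\neq\theta_2(a)$ then $\tau_1(\{a\})\neq\tau_2(\{a\})$), this produces a proper class of definable nontrivial $\of$-automorphisms of $V$.

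There is essentially no obstacle here; the only thing to be careful about is the word ``definable'', which one reads as ``definable from parameters''. If one insists on parameter-free definability, then one still gets at least one nontrivial automorphism from the explicit swap of $\emptyset$ and $\{\emptyset\}$ used in the theorem, and by iterating such swaps over other parameter-free definable pairs one obtains infinitely many. Either reading gives the corollary.
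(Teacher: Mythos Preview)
Your proposal is correct and follows essentially the same approach as the paper: both simply harvest the $\of$-automorphism $\tau:u\mapsto\theta\image u$ from the proof of Theorem~\ref{Theorem.DifferentInSameOf}, and both note that distinct choices of $\theta$ yield distinct $\tau$. You have merely filled in more detail (the explicit singleton argument for distinctness and the discussion of parameters) than the paper's two-sentence proof provides.
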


\begin{proof}
 This is precisely what the construction of the map $\tau$ in the proof of theorem~\ref{Theorem.DifferentInSameOf} provides. Note that distinct choices of $\theta$ lead to distinct such~$\of$-automorphisms $\tau$.
\end{proof}

Another way to express what is going on in the proof is that $\tau$ is an isomorphism of the structure $\< V,{\in^*},{\subseteq}>$ with $\<V,{\in},{\subseteq}>$, and so $\subseteq$ is in fact the same as the corresponding inclusion relation $\subseteq^*$ that one would define from $\in^*$. Corollary~\ref{Corollary.NontrivialAutomorphismsOfOf} contrasts with the fact that \ZFC\ proves that $\<V,{\in}>$ and indeed any transitive set or class is rigid, since if $\pi:V\to V$ is an~$\in$-respecting bijection, there can be no~$\in$-minimal set $a$ with $\pi(a)\neq a$.

\begin{corollary}\label{Corollary.InNotDefinableFromOf}
 One cannot define~$\in$ from $\subseteq$ in any model of set theory, even allowing parameters in the definition.
\end{corollary}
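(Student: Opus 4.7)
The plan is to derive a contradiction from the hypothesis that $\in$ is $\of$-definable with parameters, by producing a nontrivial $\of$-automorphism of $V$ that fixes those parameters and then invoking the classical rigidity of $\langle V, \in\rangle$.

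First, suppose toward contradiction that there is a formula $\varphi(x,y,\vec z)$ in the language of $\of$ and parameters $\vec p$ in $V$ for which
$$x\in y \quad\iff\quad \varphi(x,y,\vec p)$$
holds in $\langle V,{\in}\rangle$. My task reduces to producing a nontrivial $\of$-automorphism $\tau\colon V\to V$ with $\tau(p_i)=p_i$ for each coordinate $p_i$ of $\vec p$. Given such a $\tau$, since $\varphi$ speaks only of $\of$ and its quantifiers range over all of $V$, one has $\varphi(x,y,\vec p)\iff \varphi(\tau(x),\tau(y),\vec p)$ for every $x,y$; combining with the definability hypothesis gives $x\in y\iff \tau(x)\in\tau(y)$. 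Thus $\tau$ is an $\in$-automorphism of $V$, and then the standard $\in$-induction argument recalled just after Theorem~\ref{Theorem.DifferentInSameOf} (there is no $\in$-minimal $a$ with $\tau(a)\neq a$) forces $\tau$ to be the identity, contradicting its nontriviality.

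The construction of $\tau$ reuses the recipe from the proof of Theorem~\ref{Theorem.DifferentInSameOf}, but with $\theta$ chosen to act only at ranks above the parameters. Set $\alpha=\sup_i\rank(p_i)+1$, and let $\theta\colon V\to V$ be the permutation that swaps $V_\alpha$ with $V_\alpha\cup\{V_\alpha\}$ and is the identity on every other set; this is definable from $\vec p$. Define $\tau(u)=\theta\image u$. Then $\tau$ is an $\of$-automorphism of $V$ by exactly the calculation in the proof of Theorem~\ref{Theorem.DifferentInSameOf}, and it is nontrivial because $\tau(\{V_\alpha\})=\{V_\alpha\cup\{V_\alpha\}\}\neq\{V_\alpha\}$. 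Crucially, every element of any $p_i$ has rank strictly below $\alpha$, hence is fixed by $\theta$; the two swapped sets $V_\alpha$ and $V_\alpha\cup\{V_\alpha\}$ both have rank at least $\alpha$, so neither belongs to any $p_i$. Consequently $\tau(p_i)=\theta\image p_i=p_i$ for each $i$.

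The main obstacle is exactly this parameter-fixing issue: the automorphisms supplied by Corollary~\ref{Corollary.NontrivialAutomorphismsOfOf} in general need not fix a prescribed tuple $\vec p$, so the witness to nondefinability must be tailored to the parameters. The rank truncation above resolves this cleanly, and the rest of the argument is a one-line invocation of $\in$-rigidity; hence the corollary follows.
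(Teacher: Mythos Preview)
Your proof is correct and follows essentially the same approach as the paper: build a nontrivial $\of$-automorphism $\tau$ fixing the given parameters, and conclude that $\in$ cannot be $\of$-definable from those parameters. The paper's version differs only cosmetically---it chooses $\theta$ to swap $z$ with $\{z\}$ rather than moving above the parameters in rank, and it observes directly that $\tau$ fails to preserve $\in$ rather than routing through the rigidity of $\langle V,\in\rangle$---but the underlying idea is identical.
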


\begin{proof}
For any parameter $z$, let us choose $z$-definably the bijection $\theta$ in the proof of theorem~\ref{Theorem.DifferentInSameOf} to be nontrivial, while still having $\theta\image z=z$. For example, perhaps $\theta$ swaps $z$ and $\{z\}$, and leaves all other sets unchanged. From this, it follows by the proof of theorem~\ref{Theorem.DifferentInSameOf} that the map $\tau:a\mapsto\theta\image a$ is an~$\of$-automorphism, and our choice of $\theta$ ensures that $\tau(z)=z$. So $\tau$ preserves every relation definable from $\subseteq$ and parameter $z$. But $\tau$ does not preserve~$\in$, and consequently,~$\in$ must not be definable from~$\of$ using parameter $z$.
\end{proof}

Nevertheless, for a counterpoint, we claim that there is a weak sense in which the isomorphism type of $\< V,\in>$ is implicit in the inclusion relation $\subseteq$, namely, any other class relation $\in^*$ having that same inclusion relation is isomorphic to the~$\in$ relation.

\begin{theorem}\label{Theorem.IfInclusionSameThenInIsIsomorphic}
Assume ZFC in the universe $\<V,\in>$. Suppose that $\in^*$ is a definable class relation in $\<V,{\in}>$ for which $\<V,\in^*>$ is a model of set theory (a weak set theory suffices), such that the corresponding inclusion relation $$u\subseteq^* v\quad\iff\quad\forall a\,(a\in^* u\to a\in^* v)$$is the same as the usual inclusion relation $u\subseteq v$. Then the two membership relations are isomorphic $$\<V,\in>\cong\<V,\in^*>.$$
\end{theorem}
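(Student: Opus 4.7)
The plan is to build an isomorphism $f : \langle V, \in \rangle \to \langle V, \in^* \rangle$ by $\in$-recursion. First, I would observe that because $\subseteq$ and $\subseteq^*$ coincide, so do their atoms, and in either structure the atoms are exactly the singletons; thus both $a \mapsto \{a\}_\in$ and $a \mapsto \{a\}_{\in^*}$ are definable bijections from $V$ onto the class of $\subseteq$-atoms, and composing them produces a definable bijection $\beta : V \to V$ characterized by $\{a\}_{\in^*} = \{\beta(a)\}_\in$. Reading off the atomic decomposition of an arbitrary $b \in V$ in the two ways gives the key equation
\[
  a \in^* b \quad\Longleftrightarrow\quad \beta(a) \in b,
\]
equivalently, that the $\in^*$-content of any $b$ is $\{\beta^{-1}(d) : d \in b\}$.

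Next I would define $f : V \to V$ by $\in$-recursion,
\[
  f(v) = \{\, \beta(f(c)) : c \in v \,\}_\in,
\]
and use the key equation to check that the $\in^*$-content of $f(v)$ is exactly $\{f(c) : c \in v\}$. This immediately yields $c \in v \Longrightarrow f(c) \in^* f(v)$, with the converse following once injectivity is established. Injectivity I would prove by a routine $\in$-induction: if $f(v_1) = f(v_2)$, cancel $\beta$ on both sides, apply the induction hypothesis, and invoke $\in$-extensionality.

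The hard part is surjectivity, and this is where I would invoke foundation for $\langle V,\in^*\rangle$ — which is actually automatic from the key equation, since any set-sized $S \subseteq V$ is the $\in^*$-content of the single set $\beta \image S \in V$, reducing external $\in^*$-foundation on $S$ to internal $\in$-foundation on $\beta \image S$. To prove $f[V] = V$, I would suppose $y \notin f[V]$ and note the pivotal observation: if every $\in^*$-predecessor of $y$ belonged to $f[V]$, say as $b = f(v_b)$, then setting $v = \{v_b : b \in^* y\}$ and computing from the defining recursion would force $f(v) = y$, a contradiction. Hence $y$ has an $\in^*$-predecessor outside $f[V]$, and iterating by dependent choice produces an infinite $\in^*$-descending sequence $y = y_0 \ni^* y_1 \ni^* \cdots$ of elements all outside $f[V]$. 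Packaging this sequence as the $\in^*$-content of the set $\beta \image \{y_i : i \in \omega\} \in V$ and applying internal $\in^*$-foundation to it delivers the contradiction. This establishes surjectivity and completes the isomorphism. The delicate step throughout is ensuring enough of $\in^*$-set theory to carry out this last argument, but since $\in^*$ is entirely determined by $\beta$ via the key equation, it automatically inherits the needed axioms from the ambient \ZFC.
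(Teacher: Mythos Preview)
Your argument is correct and essentially identical to the paper's: your bijection $\beta$ is the paper's $\theta=\eta^{-1}$, your recursively defined $f$ is the paper's map $b\mapsto b^*$, and the injectivity/surjectivity arguments proceed step for step in the same way.

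One caution, though: your parenthetical claim that $\in^*$-foundation is ``automatic from the key equation'' is not correct. If $\beta$ merely swaps $\emptyset$ and $\{\emptyset\}$ and fixes everything else, then the relation $a\in^* b\iff\beta(a)\in b$ still satisfies $\subseteq^*=\subseteq$, yet gives $\{\emptyset\}\in^*\{\emptyset\}$, so $\in^*$ fails foundation. The reduction you sketch does not go through: an $\in$-minimal element $\beta(s)$ of $\beta\image S$ satisfies $\beta(s)\cap\beta\image S=\emptyset$, whereas $\in^*$-minimality of $s$ in $S$ would require $s\cap\beta\image S=\emptyset$, and these conditions differ. This does not damage your actual proof, since foundation for $\langle V,\in^*\rangle$ is already part of the theorem's hypothesis that $\langle V,\in^*\rangle$ be a model of set theory, and the paper invokes it in the surjectivity step just as you do; but your final sentence suggesting that the weak-set-theory hypothesis on $\in^*$ is dispensable should be dropped.
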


\begin{proof}
Since a singleton set $\{a\}$ has exactly two subsets with respect to the usual $\subseteq$ relation---the empty set and itself---this must also be true with respect to the inclusion relation $\subseteq^*$ defined via $\in^*$, since we have assumed $\subseteq^*=\subseteq$. Since only singletons have exactly two subsets, the object $\{a\}$ must also be a singleton with respect to $\in^*$, and consequently there is a unique object $\eta(a)$ such that $x\in^*\singleton{a}\iff x=\eta(a)$. Since every object has a singleton with respect to $\in^*$, it follows that $\eta$ is surjective, and since every object has a unique singleton with respect to $\in^*$, it follows that $\eta$ is injective. So $\eta:V\to V$ is bijective. Let $\theta=\eta^{-1}$ be the inverse permutation.

Unwrapping things, we may observe that $$a\in u\quad\iff\quad \{a\}\subseteq u\quad\iff\quad \{a\}\subseteq^* u\quad\iff\quad\eta(a)\in^* u,$$ and so $a\in u\iff\eta(a)\in^* u$. By taking inverses, we deduce for any sets $b$ and $u$ that
 $$b\in^* u\quad\iff\quad \theta(b)\in u.$$
Using~$\in$-recursion, let us define
 $$b^*=\{\ \theta(a^*)\mid a\in b\ \}.$$
This is the step of the proof where we need $\in^*$ to be a definable class with respect to~$\in$, in order that the class function $\eta$ and hence also $\theta$ are classes with respect to~$\in$, so that we may legitimately undertake~$\in$-recursion using them. We do not actually need that $\in^*$ is {\em definable} in $\<V,{\in}>$, but rather we need that $\in^*$ is an amenable class to this structure, in the sense that the axioms of \ZFC\ hold even if we allow a predicate for $\in^*$. For example, the argument would work in \Godel-Bernays set theory, provided that $\in^*$ is a class, as we state in corollary~\ref{Corollary.GBCSameOfIsomorphicIn}.

Continuing with the proof, we claim next by~$\in$-induction that the map $b\mapsto b^*$ is one-to-one, since if there is no violation of this for the elements of $b$, then we may recover $b$ from $b^*$ by applying $\theta^{-1}$ to the elements of $b^*$ and then using the induction assumption to find the unique $a$ from $a^*$ for each $\theta(a^*)\in b^*$, thereby recovering $b$. So $b\mapsto b^*$ is injective.

We claim that this map is also surjective. If $y_0\neq b^*$ for any $b$, then there must be an element of $y_0$ that is not of the form $\theta(a^*)$ for any $a$, since otherwise we would be able to realize $y_0$ as the corresponding $b^*$. Since $\theta$ is surjective, this means there is $\theta(y_1)\in y_0$ with $y_1\neq b^*$ for any $b$. Continuing, there is $y_{n+1}$ with $\theta(y_{n+1})\in y_n$ and $y_{n+1}\neq b^*$ for any $b$. Let $z=\{\ \theta(y_n)\mid n\in\omega\ \}$. Since $x\in^* u\iff \theta(x)\in u$, it follows that the $\in^*$-elements of $z$ are precisely the $y_n$'s. But $\theta(y_{n+1})\in y_n$, and so $y_{n+1}\in^* y_n$. So $z$ has no $\in^*$-minimal element, violating the axiom of foundation for $\in^*$, a contradiction. So the map $b\mapsto b^*$ is a bijection of $V$ with $V$.

Finally, we observe that because $$a\in b\iff\theta(a^*)\in b^*\iff a^*\in^* b^*,$$ it follows that the map $b\mapsto b^*$ is an isomorphism of $\<V,\in>$ with $\<V,\in^*>$, as desired.
\end{proof}

The conclusion is that although~$\in$ is not definable from $\subseteq$, nevertheless, the isomorphism type of~$\in$ is implicit in $\subseteq$, in the weak sense that any other class relation $\in^*$ giving rise to the same inclusion relation $\subseteq^*=\subseteq$ is isomorphic to~$\in$. The proof actually shows the following:

\begin{corollary}\label{Corollary.GBCSameOfIsomorphicIn}
 In \Godel-Bernays set theory, if $\in^*$ is a class binary relation and $\<V,{\in^*}>$ happens to be a model of set theory and has the same inclusion relation $\of^*=\of$ as the usual inclusion relation $\of$ defined from~$\in$, then $\<V,{\in^*}>$ is isomorphic to $\<V,{\in}>$.
\end{corollary}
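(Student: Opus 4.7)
The plan is to run the proof of Theorem~\ref{Theorem.IfInclusionSameThenInIsIsomorphic} essentially verbatim, observing that its reliance on the definability of $\in^*$ can in each case be replaced by the fact that, in \GBC, $\in^*$ is a class and hence a legitimate parameter in class comprehension and class recursion. The theorem's own proof already flags that definability is needed only to justify the $\in$-recursion defining $b\mapsto b^*$, and even there the author notes that amenability of $\in^*$ to the ground structure is what is really being used; in \GBC\ this is automatic.

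First, I would extract the singleton-witness function $\eta\from V\to V$ exactly as in the theorem. The characterization ``$u$ is a singleton $\iff u$ has exactly two $\of$-subsets'' is purely about $\of$, so under the hypothesis $\of^*=\of$ it picks out the same objects for $\in^*$ as for $\in$. Thus each set $\{a\}$ (in the usual sense) is also an $\in^*$-singleton, and $\eta(a)$ is defined as its unique $\in^*$-element. In \GBC\ the relation $\in^*$ is a class, so $\eta$ and its inverse $\theta=\eta^{-1}$ are classes, and from $a\in u\iff \{a\}\of u\iff \{a\}\of^* u\iff \eta(a)\in^* u$ we recover the core identity
$$b\in^* u\quad\iff\quad \theta(b)\in u.$$

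The crux of the corollary is then the $\in$-recursion $b^*=\{\,\theta(a^*)\mid a\in b\,\}$. In \GBC, class functions may be defined by recursion along the well-founded class relation $\in$ using class parameters, and here $\theta$ is a class, so this recursion legitimately produces a class function $b\mapsto b^*$. This is the one step I expect to be the conceptual obstacle---namely, making sure that the recursion and the subsequent $\in$-induction proofs do not secretly require $\in^*$ to be definable, only that it is a class---but in \GBC\ class recursion and class induction are standard. The injectivity of $b\mapsto b^*$ follows by $\in$-induction as in the theorem; the surjectivity argument is likewise unchanged, producing, from a hypothetical $y_0$ not in the range, a descending sequence $\dots\in^* y_1\in^* y_0$ contradicting the foundation axiom of $\<V,\in^*>$, where forming the set $z=\{\,\theta(y_n)\mid n\in\omega\,\}$ uses only ordinary $\in$-replacement along $\omega$. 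Finally, the equivalence $a\in b\iff a^*\in^* b^*$ drops out of the defining equation of $b\mapsto b^*$, yielding $\<V,\in>\iso\<V,\in^*>$ as required.
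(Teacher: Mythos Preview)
Your proposal is correct and is exactly the paper's approach: the paper does not give a separate argument but simply remarks that the proof of Theorem~\ref{Theorem.IfInclusionSameThenInIsIsomorphic} goes through in \GBC\ once $\in^*$ is a class, since definability was used only to ensure that $\eta$ and $\theta$ are classes so that the $\in$-recursion defining $b\mapsto b^*$ is legitimate. You have identified the same point and carried it out with the same steps.
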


As we have mentioned, the argument uses that $\in^*$ is a class (so that we have \ZFC\ in the language of the structure $\<V,{\in},{\in^*}>$), and it is natural to wonder whether one can omit that hypothesis. For example, perhaps $\in^*$ is not definable in $\<V,{\in}>$ nor even amenable to this structure. It would be a much stronger result with philosophical significance to show that~$\of$ can truly identify the isomorphism class of~$\in$.

\begin{question}\label{Question.NonIsomorphicInButSameSubset}
 Can there be two models of set theory $\<W,{\in}>$ and $\<W,{\in^*}>$, not necessarily classes with respect to each other in the sense of \Godel-Bernays set theory, which have the same underlying universe $W$ and the same inclusion relation $\subseteq=\subseteq^*$, but which are not isomorphic?
\end{question}

For example, can we arrange that $\<W,{\in}>$ has the continuum hypothesis and $\<W,{\in^*}>$ does not? In theorem~\ref{Theorem.DifferentTheoriesSameOf}, we prove that the answer is affirmative. In fact, one can arrange that the models have any desired consistent theories extending \ZFC, but with the same inclusion relation. This result shows that, contrary to what might have been suggested by theorem~\ref{Theorem.IfInclusionSameThenInIsIsomorphic}, the inclusion relation~$\of$ does not actually identify the~$\in$-isomorphism class of the universe, or even the~$\in$-theory of the model of set theory in which it arises.

\section{Set-theoretic mereology is a decidable theory}

Let us turn now to the fact that set-theoretic mereology, considered as the theory of the structure $\<V,{\of}>$, constitutes a decidable theory. This, on our view, appears to be devastating for any attempt to use set-theoretic mereology by itself as a foundation of mathematics, a view we shall discuss further in section \S\ref{Section.Conclusions}. Meanwhile, while proving the decidability result in this section, we shall also identify exactly what is the complete theory of $\of$-based set-theoretic mereology.

To warm up, consider first the easier case of the set $\HF$ of hereditarily finite sets, which in terms of the von Neumann hierarchy is the same as $\HF=V_\omega$. Note that $\<\HF,{\of}>$ is a lattice order. Furthermore, every element of $\HF$ is a finite subset of $\HF$, a countable set, and every such finite subset is realized in $\HF$. Hence, $\<\HF,{\of}>$ is simply isomorphic to the lattice of finite subsets of a countable set, such as the collection of finite subsets of $\N$ under inclusion. Such a lattice structure is well-known to be decidable. This lattice is isomorphic, for example, to the lattice of square-free natural numbers under divisibility, associating each square-free number with the set of its prime divisors, and in this sense, inclusion in \HF\ is analogous to divisibility in arithmetic. The lattice of divisibility for square-free numbers, in turn, is definable in the natural numbers $\<\N,{\mid}>$ under divisibility $\mid$, a structure also known to be decidable; divisibility is definable from multiplication, and $\<\N,{\cdot}>$ also is decidable. So  $\<\HF,{\of}>$ has a decidable theory.

Let us now consider more generally the full structure $\<V,\of>$, where $V$ is the entire set-theoretic universe. We shall prove that this structure also has a decidable theory, and indeed it will follow from our analysis that it has exactly the same theory as $\<\HF,{\of}>$.

\begin{theorem}\label{Theorem.MereologyIsDecidable}
 Set-theoretic mereology, considered as the theory of $\<V,\of>$, is precisely the theory of an atomic unbounded relatively complemented distributive lattice, and furthermore, this theory is finitely axiomatizable, complete and decidable.
\end{theorem}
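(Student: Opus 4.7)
The plan is to write down a finite list $T$ of first-order axioms for an atomic unbounded relatively complemented distributive lattice with least element, verify that $\<V,\of>$ is a model of $T$, and then prove that $T$ is complete. From these ingredients everything follows at once: $\mathrm{Th}(\<V,\of>)$ must equal $T$ (a complete theory contained in another consistent theory coincides with it), and any finitely axiomatized complete theory is decidable by the standard enumerate-theorems argument.

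First I would list the axioms explicitly: (i) the lattice axioms for $\cap$ and $\cup$, (ii) distributivity, (iii) existence of a least element $0$ (the empty set), (iv) no greatest element, (v) atomicity (every nonzero element has an atom below it), and (vi) relative complementation: whenever $a\of b\of c$ there is a $b'$ with $b\cap b' = a$ and $b\cup b' = c$. Verifying that $\<V,\of>$ satisfies these is routine: the atoms are the singletons $\{x\}$, there is no universal set, and for $A\of B\of C$ the set $B' = (C\setminus B)\cup A$ is the required relative complement of $B$ in the interval $[A,C]$.

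The substance of the argument is completeness of $T$, which I would attack via a back-and-forth / Ehrenfeucht--\Fraisse{} argument between any two models $L_1$ and $L_2$ of $T$. The key observation is that for an $n$-round EF game only finitely much information about atom counts is relevant: Duplicator maintains, for each Boolean combination of the chosen elements $\bar a$ in $L_1$ and the corresponding combination of $\bar b$ in $L_2$, that the two either both equal $0$ or both are nonzero, and that the numbers of atoms below them are either equal or both exceed a threshold $N_k$ depending on the remaining rounds $k$. Unboundedness, together with atomicity and relative complementation, forces each model to have infinitely many atoms (else the join of all atoms would be a top), and relative complementation lets Duplicator split any chosen element into two disjoint pieces with prescribed atom counts whenever atoms are sufficiently plentiful. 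With these two facts Duplicator can respond to any Spoiler move, yielding elementary equivalence of all models of $T$ and hence completeness. A cleaner equivalent route would be to establish quantifier elimination in the language augmented with predicates ``there exist at least $n$ atoms below $x$,'' which handles the atom-count bookkeeping uniformly.

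The principal obstacle is the completeness step, specifically verifying that the Boolean-combination-plus-atom-count invariant really can be maintained by Duplicator throughout any finite EF game, i.e.\ that relative complementation together with the infinitude of atoms affords enough flexibility to meet any atom-count demand Spoiler can make. Once that invariant is set up, the rest of the theorem is standard: $\mathrm{Th}(\<V,\of>) = T$ follows from completeness, finite axiomatizability is by inspection of the list above, and decidability is then immediate from the enumerate-proofs procedure.
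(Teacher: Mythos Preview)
Your proposal is correct, and the atom-count invariant you identify is exactly the heart of the matter. The paper, however, takes the route you mention only as an alternative: it proves completeness by an explicit quantifier-elimination argument in the language expanded by the operations $\cap$, $\cup$, relative complement $x-y$, and the unary size predicates $|x|=n$ and $|x|\geq n$. The mechanism is to reduce every atomic formula to a size assertion about a term, decompose each term into a disjoint union of Venn-diagram cell terms in the free variables, and then eliminate an existential quantifier $\exists x$ by translating the size constraints on cells of the form $x\cap c$, $c-x$, and $x-(x_1\cup\cdots\cup x_N)$ into size constraints on the $x$-free cells $c$ alone; unboundedness handles the last type of cell, and independence of distinct cells handles the rest.

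The two approaches are of course intertranslatable: your EF-game invariant (agreement on atom counts of Boolean combinations up to a round-dependent threshold $N_k$) is precisely the back-and-forth shadow of the paper's quantifier-free normal form. The explicit elimination buys a concrete decision procedure and makes the corollary $\<\HF,{\of}>\elesub\<V,{\of}>$ immediate, since both structures agree on all quantifier-free size assertions about hereditarily finite parameters. Your EF-game route is arguably more conceptual and would transfer more readily if one wanted to vary the axioms (e.g., drop atomicity), but it leaves the decision algorithm implicit in the enumerate-proofs argument rather than exhibiting it directly.
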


We shall prove this theorem by means of the more specific quantifier-elimination argument of theorem~\ref{Theorem.QuantifierElimination}. These results should be viewed as partaking in Tarski's classification of the elementary classes of Boolean algebras by means of the Tarski invariants (see~\cite[theorem~5.5.10]{ChangKeisler1990:ModelTheory}) and \Ersov's extension of that work to the case of relatively complemented distributive lattices~\cite{Ershov1964:DecidabilityOfTheElementaryTheoryOfRelativelyComplementedLatticesAndOfTheTheoryOfFilters}. Tarski had used a quantifier-elimination argument to show the decidability of atomless Boolean algebras, which led him to the Tarski invariants for all Boolean algebras. \Ersov's generalization proceeds by a general technique (described in~\cite[theoremf~15.6]{Monk1976:MathematicalLogic} and also~\cite[theorem~3.1.1]{BaudischSeeseTuschikWeese1985:DecidabilityAndQuantifierElimination}) allowing him to establish a large number of decidability results. For example, \Ersov's method enabled him to handle a Boolean algebra expanded by a predicate for a prime ideal (see~\cite[p.~1054]{Weese1989:DecidableExtensionsOfTheTheoryOfBooleanAlgebras}). The statement and proof of theorem~\ref{Theorem.QuantifierElimination} has an affinity with the corresponding quantifier-elimination result for infinite atomic Boolean algebras, as in \cite[theorem~6.20]{Poizat2000:ACourseInModelTheory}; see also~\cite[p.~66]{Hodges1993:ModelTheory}. So the decidability result we are claiming here is not new and follows immediately from \Ersov's proof that the theory of relatively complemented distributive lattices is decidable (regardless of atomicity and unboundedness), and the quantifier-elimination result is similar to that for infinite atomic Boolean algebras. Nevertheless, in order to provide a self-contained presentation, let us give here a direct elimination-of-quantifiers argument for the central case of set-theoretic mereology: an atomic unbounded relatively complemented distributive lattice.

\begin{theorem}\label{Theorem.QuantifierElimination}
 If $\<W,\in^W>$ is a model of set theory with the corresponding inclusion relation~$\of$, then $\<W,\of>$ is an atomic unbounded relatively complemented distributive lattice, and this theory satisfies the elimination of quantifiers in the language containing the Boolean operations of intersection $x\intersect y$, union $x\union y$, relative complement $x-y$ and the unary size relations $|x|=n$, for each natural number $n$.
\end{theorem}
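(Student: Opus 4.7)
The plan is first to verify the algebraic part of the statement and then to prove quantifier elimination by the familiar reduction to eliminating a single existential $\exists x\,\phi(x,\bar y)$ with $\phi$ a conjunction of literals. The lattice axioms are direct: the operations $\cap,\cup$ give meet and join, distributivity is inherited from the set-theoretic operations, every nonempty set contains a singleton (so atoms exist and the lattice is atomic), for $a\of b$ the relative complement $b-a$ witnesses complementation in the interval $[\emptyset,b]$, and $z\cup\{z\}$ properly extends $z$ for any set $z$ so there is no top element. Each of these properties is finitely axiomatizable.

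For the elimination itself, I would introduce the cell decomposition of $x$ relative to $\bar y=y_1,\dots,y_n$. Writing $Y=y_1\cup\cdots\cup y_n$, for each nontrivial sign vector $\epsilon\in\{+,-\}^n$ (at least one $+$), set $C_\epsilon=\bigcap_i y_i^{\epsilon_i}$, where $y_i^+=y_i$ and $y_i^-=Y-y_i$. The cells $C_\epsilon$ partition $Y$, and setting $x_\epsilon=x\cap C_\epsilon$ and $x_\infty=x-Y$, the pieces $\{x_\epsilon\}_\epsilon$, $\{C_\epsilon-x_\epsilon\}_\epsilon$, and $x_\infty$ partition $x\cup Y$. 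Every Boolean term in $x,\bar y$ is then a disjoint union of some specific subcollection of these pieces, and therefore every literal of $\phi$ rewrites as a Boolean combination of size assertions on the pieces: containments $t_1\of t_2$ become $|t_1-t_2|=0$, equalities become conjunctions of two such, and size statements $|t|=k$ decompose into sums of sizes of pieces via the partition.

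Now I would eliminate the existential by a finite counting argument. Only finitely many integers appear in $\phi$, so fix $N$ larger than all of them. Up to what $\phi$ can detect, a realization of $\phi$ is specified by a profile $(s_\epsilon)_\epsilon, s_\infty$ of truncated sizes $\min(\,\cdot\,,N)$ for the pieces. Such a profile is realizable over $\bar y$ iff each $s_\epsilon$ does not exceed the truncated size of $C_\epsilon$---a quantifier-free condition on $\bar y$ expressible with the predicates $|\cdot|=k$ applied to the Boolean terms $C_\epsilon$---and $s_\infty$ is completely unconstrained, because for any $Y$ one can exhibit arbitrarily large sets disjoint from $Y$ (for instance by taking many elements of $V_\kappa$ for $\kappa$ large enough, since $|V_\kappa-Y|\geq|V_\kappa|-|Y|$). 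The finite disjunction over profiles compatible with $\phi$ then yields the desired quantifier-free equivalent in $\bar y$.

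The main obstacle, as I see it, is the bookkeeping in the second step: carefully verifying that the sizes of arbitrary compound Boolean terms $t(x,\bar y)$ appearing in $\phi$ are correctly and uniformly expressible via the finite profile, particularly at the interface between exact finite values and ``at least $N$'' truncated values. Once this reduction is clean, the completeness, decidability, and finite axiomatizability asserted in theorem~\ref{Theorem.MereologyIsDecidable} follow at once: the explicit axiom list gives finite axiomatizability, the quantifier-free sentences are decided by the axioms so the theory is complete, and the elimination procedure together with the decidability of the quantifier-free fragment yields decidability of the whole.
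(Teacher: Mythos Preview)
Your proposal is correct and follows essentially the same route as the paper: verify the lattice axioms, reduce all atomic formulas to size assertions about the Venn-diagram cell terms in the parameters $\bar y$ (including the ``outside'' cell $x-(y_1\cup\cdots\cup y_n)$), and then eliminate $\exists x$ by observing that the constraints on distinct cells are independent and that unboundedness makes the outside cell unconstrained. The only cosmetic difference is that the paper carries along the auxiliary predicates $|x|\geq n$ explicitly and eliminates negations up front, whereas you package the same information into truncated size profiles; either bookkeeping device works, and the paper's version makes the cell-by-cell independence step slightly more transparent.
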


\begin{proof}
In any model of set theory, the subset relation~$\of$ is of course a partial order and indeed a lattice order, since any two sets $a$ and $b$ have a least upper bound, the union $a\union b$, and a greatest lower bound, the intersection $a\intersect b$. The lattice is distributive, because intersection and union both distribute over the other. The lattice has a least element $\emptyset$, but no greatest element (and this is what we mean by unbounded; for a relatively complemented distributive lattice, it is equivalent to saying that it is not a Boolean algebra). The lattice is relatively complemented, since for any two sets $a,b$, the difference set $a-b$ is the complement of $b$ relative to $a$, meaning that $b\intersect (a-b)=0$ and $a=(a\intersect b)\union(a-b)$. The lattice is atomic, since every nonempty set is the join of the singleton sets below it, and those singletons are atoms with respect to inclusion. In summary, $\<W,\of>$ is an atomic unbounded relatively complemented distributive lattice, and this is what we shall use for the quantifier-elimination argument.

In any lattice, for any natural number $n$ we may introduce a unary predicate, which we shall write as $|x|=n$, which we define to hold precisely when $x$ is the join of $n$ distinct atoms. For any particular $n$, this relation is expressible in the language of lattices, that is, from~$\of$ in our case. In our model of set theory, this relation expresses that $x$ is a finite set with $n$ elements. Similarly, in any lattice let us introduce the unary predicate denoted $|x|\geq n$, which expresses that $x$ admits a decomposition as the join of $n$ distinct nonzero incompatible elements: $x=y_1\union\cdots\union y_n$, where $y_i\neq 0$ and $y_i\intersect y_j=0$ for $i\neq j$. In an atomic relatively complemented lattice, the relation $|x|\geq n$ holds just in case there are at least $n$ atoms $a\leq x$. This relation is also definable from the lattice order.

We shall prove that every formula in the language of lattices is equivalent, over the theory of atomic unbounded relatively complemented distributive lattices, to a quantifier-free formula in the language of the order $a\of b$, equality $a=b$, meet $a\intersect b$, join $a\union b$, relative complement $a-b$, constant $0$, the unary relations $|x|=n$ and $|x|\geq n$, where $n$ is respectively any natural number.

We prove the result by induction on formulas. The collection of formulas equivalent to a quantifier-free formula in that language clearly includes all atomic formulas and is closed under Boolean combinations. So it suffices to eliminate the quantifier in a formula of the form $\exists x\, \varphi(x,\ldots)$, where $\varphi(x,\ldots)$ is quantifier-free in that language. Let us make a number of observations that will enable various simplifying assumptions about the form of $\varphi$.

Because equality of terms is expressible by the identity $a=b\iff a\of b\of a$, we do not actually need $=$ in the language (and here we refer to the use of equality in atomic formulas of the form $s=t$ where $s$ and $t$ are terms, and not to the incidental appearance of the symbol $=$ in the unary predicate $|x|=n$). Similarly, in light of the equivalence $a\of b\iff |a-b|=0$, we do not need to make explicit reference to the order $a\of b$. So we may assume that all atomic assertions in $\varphi$ have the form $|t|\geq n$ or $|t|=n$ for some term $t$ in the language of meet, join, relative complement and $0$. We may omit the need for explicit negation in the formula by systematically applying the equivalences:
$$\neg(|t|\geq n)\iff \bigvee_{k<n}|t|=k\quad\text{ and}$$
$$\neg(|t|=n)\iff (|t|\geq n+1)\vee\bigvee_{k<n}|t|=k.$$
So we have reduced to the case where $\varphi$ is a positive Boolean combination of expressions of the form $|t|\geq n$ and $|t|=n$.

Let us consider the form of the terms $t$ that may arise in the formula. List all the variables $x_0,x_1,\ldots,x_N$ that arise in any of the terms appearing in $\varphi$, where $x_0$ is the variable $x$, and consider the Venn diagram corresponding to these variables. The cells of this Venn diagram can each be described by a term of the form $\Intersect_{i\leq N} \pm x_i$, which we shall refer to as a cell term, where $\pm x_i$ means that either $x_i$ appears or else we have subtracted $x_i$ from the other variables. For example, $(x_0\intersect x_3)-(x_1\union x_2\union x_5)$ is a cell term in five variables, describing a cell of the corresponding Venn diagram. Since we have only relative complements and not absolute complements, we need only consider the cells where at least one variable appears positively, since the exterior region in the Venn diagram is not actually represented by any term. In this way, every term in the language of relatively complemented lattices is equivalent to a term that is a finite union of such cell terms, plus $\emptyset$ (which could be viewed as an empty union). Note that distinct cell terms are definitely representing disjoint objects in the lattice.

\begin{center}
\begin{tikzpicture}[scale=.7]
\draw (0,0) circle (1cm) node[left] {\tiny $i$};
\draw (-.5,.8) node[above left] {$s$};
\draw (1.2,0) circle (1cm) node[right] {\tiny $k$};
\draw (1.7,.8) node[above right] {$t$};
\draw (.6,0) node {\tiny $j$};
\end{tikzpicture}
\end{center}

Next, by considering the possible sizes of $s-t$, $s\intersect t$ and $t-s$ as illustrated in the diagram above, we may observe the identities
 $$|s\union t|=n\ \iff\bigvee_{i+j+k=n}(|s|=i+j)\wedge(|s\intersect t|=j)\wedge(|t|=j+k)\phantom{,}$$
 $$|s\union t|\ \geq n\iff\bigvee_{i+j+k=n}(|s|\geq i+j)\wedge(|s\intersect t|\geq j)\wedge(|t|\geq j+k).$$
Through repeated application of this, we may reduce any size assertion about any term $t$ to a Boolean combination of assertions about cell terms. (Note that size assertions about $\emptyset$ are trivially settled by the theory and can be eliminated.)

Let us now focus on the quantified variable $x$ separately from the other variables, for it may appear either positively or negatively in such a cell term. More precisely, each cell term in the variables $x,x_1,\ldots,x_N$ is equivalent to $x\intersect c$ or $c-x$, for some cell term $c$ in the variables $x_1,\ldots,x_N$, that is, not including $x$, or to the term $x-(x_1\union\cdots\union x_N)$, which is the cell term for which $x$ is the only positive variable.

We have reduced the problem to the case where we want to eliminate the quantifier from $\exists x\, \varphi$, where $\varphi$ is a positive Boolean combination of size assertions about cell terms. We may express $\varphi$ in disjunctive normal form and then distribute the quantifier over the disjunct to reduce to the case where $\varphi$ is a conjunction of size assertions about cell terms. Each cell term has the form $x\intersect c$ or $c-x$ or $x-(x_1\union\cdots \union x_N)$, where $c$ is a cell term in the list of variables without $x$. Group the conjuncts of $\varphi$ that use the same cell term $c$ in this way together. The point now is that assertions about whether there is an object $x$ in the lattice such that certain cell terms obey various size requirements amount to the conjunction of various size requirements about cells in the variables not including $x$. For example, the assertion $$\exists x\,(|x\intersect c|\geq 3)\wedge(|x\intersect c|\geq 7)\wedge(|c-x|=2)$$ is equivalent (over the theory of atomic unbounded relatively complemented distributive lattices) to the assertion $|c|\geq 9$, since we may simply let $x$ be all but $2$ atoms of $c$, and this will have size at least $7$, which is also at least $3$, and the complement $c-x$ will have size $2$. If contradictory assertions are made, such as $\exists x\, (|x\intersect c|\geq 5\wedge |x\intersect c|=3)$, then the whole formula is equivalent to $\perp$, which can be expressed without quantifiers as $0\neq 0$.

Next, the key observation of the proof is that positive assertions about the existence of such $x$ for different cell terms in the variables not including $x$ will succeed or fail independently, since those cell terms are representing disjoint elements of the lattice, and so one may take the final witnessing $x$ to be the union of the witnesses for each piece. So to eliminate the quantifier, we simply group together the atomic assertions being made about the cell terms in the variables without $x$, and then express the existence assertion as a size requirement on those cell terms. For example, the assertion $$\exists x\, (|c\intersect x|\geq 5)\wedge(|c-x|=6)\wedge (|d\intersect x|\geq 7),$$ where $c$ and $d$ are distinct cell terms in the other variables, is equivalent to $$(|c|\geq 11)\wedge(|d|\geq 7),$$ since $c$ and $d$ are disjoint and so we may let $x$ be the appropriate part of $c$ and a suitable piece of $d$. The only remaining complication concerns instances of the term $x-(x_1\union\cdots\union x_N)$. But for these, the thing to notice is that any single positive size assertion about this term is realizable in our theory, since we have assumed that the lattice is unbounded, and so there will always be as many atoms as desired disjoint from any finite list of objects. But we must again pay attention to whether the requirements expressed by distinct clauses are contradictory.

Altogether, this provides a procedure for eliminating quantifiers from any assertion in the language of lattices down to the language augmented by unary predicates expressing the size of an object. This procedure works in any atomic unbounded relatively complemented distributive lattice, and so the theorem is proved.
\end{proof}

\begin{corollary}\label{Corollary.Complete}
 The theory of atomic unbounded relatively complemented distributive lattices is complete and decidable.
\end{corollary}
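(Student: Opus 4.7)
The plan is to derive both statements directly from the quantifier elimination established in Theorem~\ref{Theorem.QuantifierElimination}. Given any sentence $\sigma$ in the language $\{\of\}$, I would first apply that elimination procedure to produce an equivalent quantifier-free sentence $\sigma^*$ in the expanded language containing meet, join, relative complement, the constant $0$, and the unary size predicates $|x|=n$ and $|x|\geq n$. Because $\sigma$ has no free variables, every term appearing in $\sigma^*$ is built from $0$ alone using $\intersect$, $\union$ and $-$, and every such term is provably equal to $0$ in the theory. Hence $\sigma^*$ is a Boolean combination of the atomic assertions $|0|=n$ and $|0|\geq n$, each of which is settled unambiguously by the axioms: $|0|=0$ holds, $|0|\geq n$ fails for every $n\geq 1$, and $|0|=n$ fails for every $n\geq 1$.

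From this, completeness is immediate: for every sentence $\sigma$, the theory decides $\sigma^*$ and hence proves either $\sigma$ or $\neg\sigma$. For decidability, I would note that the elimination procedure in the proof of Theorem~\ref{Theorem.QuantifierElimination} is uniformly syntactic---eliminating equality and order in favor of size predicates, pushing negations through using the explicit equivalences given, converting to disjunctive normal form, decomposing terms into cell terms via the Venn-diagram identities for $|s\union t|$, and finally replacing each existential conjunction of size requirements on cell terms by the appropriate size requirement on the underlying cells---so a Turing machine can carry out the reduction mechanically. Combining the effective reduction with the trivial evaluation of the reduced sentence at $0$ yields a decision procedure for the theory.

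As a sanity check, the same conclusion is reachable by an orthogonal route: Theorem~\ref{Theorem.QuantifierElimination} already exhibits a finite list of axioms for the class in question, so completeness alone would suffice, since any complete and recursively axiomatizable theory is decidable by the usual enumerate-proofs argument. The only genuine work, therefore, is bookkeeping---verifying that the reduction in Theorem~\ref{Theorem.QuantifierElimination} is effective and that no case of evaluation at $0$ is left ambiguous---and no further mathematical obstacle is anticipated.
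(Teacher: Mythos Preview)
Your proposal is correct and follows essentially the same approach as the paper: apply the quantifier elimination of Theorem~\ref{Theorem.QuantifierElimination} to reduce any sentence to a Boolean combination of size assertions about $0$, observe these are all settled by the axioms, and conclude completeness; the paper then derives decidability via the proof-search argument for finitely axiomatizable complete theories, which you also note as your alternative route. Your additional observation that the elimination procedure is itself effective and yields a direct decision algorithm is a minor elaboration the paper mentions only afterward, not a different strategy.
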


\begin{proof}
Theorem~\ref{Theorem.QuantifierElimination} shows that every sentence in this theory is equivalent to a quantifier-free sentence in the expanded language with the unary size predictes. But since such sentences have no variables, they must simply be a Boolean combination of trivial size assertions about $0$, such as $(|0|\geq 2)\vee \neg(|0|=5)$, and the truth value of all such assertions is settled by the theory. So the theory of atomic unbounded relatively complemented distributive lattices is finitely axiomatizable and complete. Every such theory is decidable: given any sentence, simply search for a proof of it or the negation.
\end{proof}

Theorem~\ref{Theorem.MereologyIsDecidable} follows from theorem~\ref{Theorem.QuantifierElimination} and corollary~\ref{Corollary.Complete}. One can also view the proof of theorem~\ref{Theorem.QuantifierElimination} as providing an explicit decision procedure: given a sentence, use the procedure to find the quantifier-free equivalent, which will be a trivial assertion about $0$, whose truth is easily determined.

The quantifier-elimination result also has the following consequence, which unifies theorem~\ref{Theorem.MereologyIsDecidable} with our remarks about $\HF$.

\begin{corollary}\label{Corollary.HFElementary}
 The structure of hereditarily finite sets $\HF$ under inclusion is an elementary substructure of the entire set-theoretic universe $V$ under inclusion. $$\<\HF,{\of}>\elesub\<V,{\of}>$$
\end{corollary}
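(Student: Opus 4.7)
The plan is to deduce elementarity directly from the quantifier-elimination of Theorem~\ref{Theorem.QuantifierElimination}. First I would verify that $\<\HF,\of>$ is itself an atomic unbounded relatively complemented distributive lattice: $\HF$ is closed under $\intersect$, $\union$ and relative complement since these operations preserve hereditary finiteness; atomicity holds because every nonempty hereditarily finite set is the join of the singletons of its elements; unboundedness is witnessed by the operation $x\mapsto x\union\singleton{x}$, which always produces a strictly larger member of $\HF$; and distributivity together with relative complementation are inherited from $\<V,\of>$. Hence both $\<\HF,\of>$ and $\<V,\of>$ are models of the single complete theory furnished by Corollary~\ref{Corollary.Complete}.

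Next, I would invoke quantifier elimination to reduce elementarity to an absoluteness check. Given a formula $\varphi(x_1,\ldots,x_n)$ in the language of~$\of$ and parameters $a_1,\ldots,a_n\in\HF$, Theorem~\ref{Theorem.QuantifierElimination} produces a quantifier-free formula $\psi(x_1,\ldots,x_n)$ in the expanded language, with $\intersect$, $\union$, relative complement, the constant $0$, and the unary size predicates $|x|=n$ and $|x|\geq n$, that is provably equivalent to $\varphi$ over the common theory. Since both structures satisfy that theory, the equivalence $\varphi\iff\psi$ holds in each, and it suffices to show that $\psi(\bar a)$ has the same truth value in $\<\HF,\of>$ as in $\<V,\of>$.

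All the ingredients of $\psi(\bar a)$ are absolute between $\HF$ and $V$ for parameters drawn from $\HF$. The Boolean operations $a\intersect b$, $a\union b$ and $a-b$, together with the constant $\emptyset$, yield the same set whether computed inside $\HF$ or inside $V$, because $\HF$ is closed under these operations and the underlying set-theoretic definitions agree. The size predicates $|t|=n$ and $|t|\geq n$ are absolute whenever $t(\bar a)\in\HF$, since a hereditarily finite set has the same finite cardinality however we evaluate it. Therefore $\psi(\bar a)$, and hence $\varphi(\bar a)$, holds in $\<\HF,\of>$ if and only if it holds in $\<V,\of>$, establishing the elementary substructure relation. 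There is no serious obstacle once Theorem~\ref{Theorem.QuantifierElimination} is in hand; the only item worth double-checking is that $\<\HF,\of>$ truly verifies every axiom of the common theory---especially unboundedness---which it does straightforwardly.
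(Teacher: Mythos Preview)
Your proposal is correct and follows exactly the paper's approach: verify that both $\<\HF,\of>$ and $\<V,\of>$ are atomic unbounded relatively complemented distributive lattices, invoke the quantifier elimination of Theorem~\ref{Theorem.QuantifierElimination}, and then observe that the quantifier-free formulas in the expanded language (Boolean operations and size predicates) are absolute between $\HF$ and $V$ for parameters in $\HF$. The paper's proof is simply a terser statement of the same argument.
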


\begin{proof}
These structures are both atomic unbounded relatively complemented distributive lattices, and so they each support the quantifier-elimination procedure. But they agree on the truth of any quantifier-free assertion about the sizes of hereditarily finite sets, and so they agree on all truth assertions about objects in $\HF$ in the language of~$\of$.
\end{proof}

\section{Mereology does not identify~$\in$ up to isomorphism}

We would like to tie up a loose end from our presentation of theorem~\ref{Theorem.IfInclusionSameThenInIsIsomorphic}, which identified a weak sense in which we are able to define the isomorphism class of~$\in$ from~$\of$. We had left it unsettled in question~\ref{Question.NonIsomorphicInButSameSubset} whether this weak sense could hold more robustly. We should like now to prove that in fact it does not. The following theorem shows that $\of$-based set-theoretic mereology is unable to distinguish the~$\in$-theory of the model of set theory in which it arises, and so~$\of$ cannot truly identify the~$\in$-isomorphism class of the model in which it resides.

\begin{theorem}\label{Theorem.DifferentTheoriesSameOf}
 For any two consistent theories extending $\ZFC$, there are models $\<W,{\in}>$ and $\<W,{\in^*}>$ of those theories, respectively, with the same underlying set $W$ and the same induced inclusion relation $\of=\of^*$.
\end{theorem}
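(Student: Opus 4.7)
The plan is to produce isomorphic copies of suitable models of $T_1$ and $T_2$ on a common underlying set, with coincident inclusion relations. First I would select models $M_1 \models T_1$ and $M_2 \models T_2$ that are saturated of the same uncountable cardinality $\kappa$. Such models exist by standard model-theoretic methods: under $\GCH$, saturated models of every successor cardinality exist for every consistent theory in a countable language, and more generally one can secure sufficient saturation in \ZFC\ alone by choosing $\kappa$ large enough relative to the type counts and invoking the usual elementary-chain construction, or by passing to special models at appropriate cardinals.

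The crucial observation is that the $\of$-reducts $\<M_1,\of^{M_1}>$ and $\<M_2,\of^{M_2}>$ are both atomic unbounded relatively complemented distributive lattices, as verified at the start of the proof of Theorem~\ref{Theorem.QuantifierElimination}, and hence by the completeness asserted in Corollary~\ref{Corollary.Complete} they are elementarily equivalent in the language of lattices. Moreover, a reduct of a $\kappa$-saturated structure to a smaller language is itself $\kappa$-saturated, since every type in the reduct language is a fortiori a type in the larger language. So the two reducts are $\kappa$-saturated models of the same complete theory of the same cardinality, and by the uniqueness of saturated models of a given cardinality there is a lattice isomorphism $\pi\colon\<M_1,\of^{M_1}>\to\<M_2,\of^{M_2}>$.

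Next I would transport the membership relation of $M_2$ back along $\pi$ by defining
$$a\in^* b\quad\iff\quad \pi(a)\in^{M_2}\pi(b)$$
for $a,b\in M_1$. By construction $\pi$ is an isomorphism $\<M_1,\in^*>\cong\<M_2,\in^{M_2}>$, so $\<M_1,\in^*>$ is a model of $T_2$. The inclusion relation $\of^*$ induced by $\in^*$ then satisfies $a\of^* b \iff \pi(a)\of^{M_2}\pi(b)$, using that $\pi$ is a bijection of the underlying sets, and this in turn is equivalent to $a\of^{M_1}b$ because $\pi$ preserves $\of$. Hence $\of^*=\of^{M_1}$, and setting $W=M_1$ with $\in=\in^{M_1}$ and $\in^*$ as just defined delivers the required pair of models.

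The main obstacle, and really the only substantive one, is arranging the existence of saturated models of the same cardinality for two arbitrary consistent extensions of \ZFC---a matter of cardinal arithmetic that is cleanly handled by the standard techniques just indicated. The remainder of the argument is a formal consequence of the completeness of $\of$-based set-theoretic mereology established in the previous section together with the routine transport along an isomorphism.
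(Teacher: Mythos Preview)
Your proposal is correct and follows essentially the same strategy as the paper: use the completeness of the theory of atomic unbounded relatively complemented distributive lattices to conclude that the $\of$-reducts of any two models of \ZFC\ are elementarily equivalent, arrange enough saturation so that these reducts are actually isomorphic, and then transport one membership relation along the isomorphism. The transport step and the verification that $\of^*=\of$ are carried out exactly as in the paper.

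The one point of divergence is how the isomorphism of the $\of$-reducts is secured. You appeal to saturated (or special) models at a suitable cardinal, which is fine but requires some care in \ZFC\ alone, as you note. The paper instead observes that the conclusion of the theorem is $\Sigma^1_1$ (via \Lowenheim--Skolem, countable witnesses suffice), invokes Shoenfield absoluteness to pass to a forcing extension where \CH\ holds, and there takes countably saturated models of size $\aleph_1$, building the isomorphism by a back-and-forth of length $\omega_1$. This sidesteps the cardinal-arithmetic issues you flag and yields a clean \ZFC\ proof without needing special models; your route is more direct model theory but leans on a slightly heavier existence lemma.
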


\begin{proof}
Suppose that $T$ and $T^*$ are two consistent theories extending \ZFC\ in the language of set theory. Note that by the \Lowenheim-Skolem theorem, if there are models as stated in the conclusion of the theorem, then there are countable models like that, and so the conclusion of the theorem has complexity $\Sigma^1_1$ in descriptive set theory. By the Shoenfield absoluteness theorem, therefore, it follows that the conclusion of the theorem is absolute to every forcing extension, including forcing extensions where the continuum hypothesis holds. We may therefore assume, without loss of generality, that the continuum hypothesis holds. It follows that there are models $\<W,{\in}>$ and $\<W,\hat\in>$ of $T$ and $T^*$, respectively, which are countably saturated on a common domain $W$ of size $\aleph_1$. That is, for each of the models and for any countable list of formulas $\varphi_i(x)$ using countably many parameters from $W$, if every finite subcollection of the formulas is realized in the model, then the whole collection is realized, meaning that some $a$ in the model satisfies every $\varphi_i(a)$. It follows that the corresponding defined structures $\<W,\of>$ and $\<W,\hat\of>$ are also both saturated. By theorem~\ref{Theorem.MereologyIsDecidable}, these are both models of the theory of infinite atomic relatively complemented distributive lattices with no largest element, and this is a complete theory. In particular, these two structures are elementarily equivalent and saturated. It follows by the usual back-and-forth construction that there is an isomorphism $\pi:\<W,\of>\cong\<W,\hat\of>$. To construct $\pi$, simply enumerate the elements of $W$ as $\<a_\alpha\mid\alpha<\omega_1>$ and define $\pi$ in stages. At stage $\alpha$, consider the type of $a_\alpha$ in $\<W,\of>$ using parameters $a_\beta$ for $\beta<\alpha$, and find an element $\pi(a_\alpha)$ that realizes the same type over $\<W,\hat\of>$ using parameters $\pi(a_\beta)$; this type is finitely realizable since each instance of this was part of the earlier types, and therefore it is realized by saturation; we can similarly ensure that $\pi$ is surjective, and so it is an isomorphism $\pi:\<W,\of>\cong\<W,\hat\of>$, as desired. Given the isomorphism, define $a\in^*b$ just in case $\pi(a)\mathrel{\hat\in}\pi(b)$, so that $\pi:\<W,{\in^*}>\cong\<W,\hat\in>$, and so $\<W,{\in^*}>\satisfies T^*$. Observe that
 $$u\of^* v\quad\iff\quad \pi(u)\mathrel{\hat\of}\pi(v)\quad\iff\quad u\of v,$$
where the first equivalence follows from the fact that $\pi$ is an isomorphism of $\<W,{\in^*}>$ with $\<W,\hat\in>$, and the second follows from the fact that $\pi$ is an isomorphism of $\<W,{\of}>$ with $\<W,\hat\of>$. So $\<W,{\in}>$ and $\<W,{\in^*}>$ are models of $T$ and $T^*$, respectively, but the corresponding defined inclusion relations are identical $\of=\of^*$.
\end{proof}

As a consequence, set-theoretic mereology appears to be oblivious to the independence phenomenon in set theory, otherwise widespread in set theory, in that it fails to distinguish between models with extremely different theories in the usual language of set theory with~$\in$, because these models can be identical in the language with only~$\of$. In particular, contrary to what might have been taken as the suggestion of theorem~\ref{Theorem.IfInclusionSameThenInIsIsomorphic}, if we are given the inclusion relation~$\of$ of a model of set theory, we cannot generally identify the isomorphism class of the~$\in$ relation from which it arose, or even the~$\in$-theory of that structure. Given only~$\of$, we cannot determine whether the continuum hypothesis holds or fails or whether there are large cardinals or not (or indeed even whether there are infinite sets or not).

\section{Mereology with the singleton operator}\label{Section.MereologyWithSingltonOperator}

Until now, we have undertaken what might be described as a study of a {\it pure} set-theoretic mereology, where we have only the inclusion relation~$\of$. But it might be natural to consider an expanded mereology, where we augment the inclusion relation~$\of$ by allowing reference also to other kinds of set-theoretic structure. For example, let us now consider the theory that arises when we augment mereology by adding the singleton operator $s:a\mapsto \singleton{a}$, which maps every object to its own singleton.

In the context of mereology, this is not as innocent as it may appear to a contemporary set theorist. Indeed, there is a surprisingly rich history of confusion and controversy about the singleton concept stretching back into the earliest days of set theory and pre-set-theory (see~\cite{Kanamori2003:TheEmptySetTheSingletonAndTheOrderedPair}), and the concept is controversial in connection with mereology, from the beginning of the subject. More recently, a literature of criticism of the singleton has arisen in response to David Lewis's~\cite{Lewis1991:PartsOfClasses} development of a class-based set-theoretic mereology. See~\cite{Hellman2009:MereologyInPhilosophyOfMathematics} and~\cite{ChampollionKrifka:Mereology} for further discussion of the singleton and mereological atoms.

What we should like to observe here is merely that if we were to expand the language by adding the singleton operator $s$ as well as the inclusion relation $\of$, then we would get a structure that is equally as powerful as the usual membership-based set theory.

\begin{theorem}\label{Theorem.MereologyWithSingletonOperator}
 Every model of membership-based set theory $\<V,\in>$ is interdefinable with the corresponding singleton-expanded mereological model $\<V,\of,s>$.
\end{theorem}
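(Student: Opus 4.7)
The plan is to verify interdefinability in both directions, and the key observation is that the singleton operator bridges the expressive gap identified in Corollary~\ref{Corollary.InNotDefinableFromOf}.

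For the direction from $\in$ to $\<V,\of,s>$, the inclusion relation is defined by the usual formula $x\of y\iff\forall z\,(z\in x\to z\in y)$, and the singleton operator $s:a\mapsto\singleton{a}$ is defined in the language of~$\in$ as the unique $b$ satisfying $\forall z\,(z\in b\iff z=a)$, which exists by the pairing axiom (or from any sufficiently weak set-theoretic fragment guaranteeing singletons).

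For the converse direction, I would observe the single identity
 $$a\in b\quad\iff\quad s(a)\of b.$$
The left-to-right implication holds because $a\in b$ means every element of $\singleton{a}$, namely $a$ itself, lies in $b$, so $\singleton{a}\of b$; and conversely if $s(a)=\singleton{a}\of b$ then $a\in\singleton{a}\of b$, giving $a\in b$. This formula uses only the symbols $s$ and~$\of$, and so~$\in$ is explicitly definable in $\<V,\of,s>$.

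There is no substantive obstacle: once the singleton operator is added, the problem solved negatively in Corollary~\ref{Corollary.InNotDefinableFromOf} trivializes, because the obstruction there relied precisely on the existence of~$\of$-automorphisms moving singletons, and any such automorphism must fail to commute with $s$. Thus the two structures have the same definable relations, and any foundational strength of $\<V,{\in}>$ is preserved by $\<V,\of,s>$.
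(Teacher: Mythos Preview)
Your proof is correct and follows essentially the same approach as the paper: define $\of$ and $s$ from $\in$ in the usual way, and recover $\in$ via the identity $a\in b\iff s(a)\of b$. Your additional verification of this biconditional and the closing remark about $\of$-automorphisms failing to commute with $s$ are fine elaborations, but the core argument is identical.
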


\begin{proof}
For the one direction, we can easily define~$\of$ and the singleton operator $s$ using~$\in$ as follows:
 $$u\of v\quad\iff\quad\forall x\, (x\in u\implies x\in v)$$
 $$y=s(x)\quad\iff\quad \forall z\, (z\in y\iff z=x).$$
Conversely, we may define~$\in$ from~$\of$ and $s$ via
 $$x\in y\quad\iff\quad s(x)\of y.$$
So the theorem is proved.
\end{proof}

Thus, this stronger version of mereology, expanded by the singleton operator, is basically equivalent to membership-based set theory as far as the foundations of mathematics is concerned. One could express, for example, that the~$\in$-universe $\<V,\in>$ satisfied \ZFC\ plus certain large cardinals, by mereological assertions expressed purely in the language of inclusion and the singleton operator. One would want to assert that $\of$ is an atomic unbounded relatively complemented distributive lattice, that $s$ is a bijection of the universe with the $\of$-atoms, but also that additional properties hold that ensure the axioms of infinity, separation, power set, replacement and so on.

\section{Conclusions}\label{Section.Conclusions}

Let us now discuss our philosophical conclusions concerning the suitability of the~$\of$-based interpretation of set-theoretic mereology to serve as a foundation of mathematics. These conclusions grow naturally out of the mathematical ideas we have presented in the earlier sections of this article.

Consider first theorem~\ref{Theorem.DifferentInSameOf} and corollary~\ref{Corollary.InNotDefinableFromOf}, which show that~$\in$ is not generally definable from~$\of$ in models of set theory. Although this shows that~$\in$-based set theory is not founded upon~$\of$-based set theory in a superficial, direct manner, we do not actually take these results to rule out~$\of$-based set-theoretic mereology as a foundation of mathematics. Rather, the results merely close off what might have been naively hoped for as an easy way to establish mereology as a powerful foundation, namely, the idea that perhaps~$\in$ and~$\of$ were interdefinable. They are not interdefinable, as the theorem and corollary establish, and so the easy road is blocked. But in order for~$\of$ to serve as a foundation of mathematics, it is not required for these relations to be interdefinable. Rather, all that would be required is that we should be able to find faithful representations of all our other mathematical structures, such as~$\in$-based set theory, within the~$\of$-based set-theoretic mereology. To insist that~$\in$ is definable from~$\of$ would be to insist further that the way that~$\of$ serves as a foundation is exactly the inverse of the way that~$\in$ happens to serve easily as a foundation for~$\of$. But perhaps~$\of$ might serve as a foundation for~$\in$-based set theory in some other way; perhaps~$\in$-based set theory is simulated within~$\of$-based set theory by means of a much more complicated interpretation or structure. (In the end, we don't believe so, but not solely on the basis of theorem~\ref{Theorem.DifferentInSameOf} and corollary~\ref{Corollary.InNotDefinableFromOf}, which do not seem to rule this out.)

Indeed, hope for such a more complicated but successful interpretation of~$\in$ within~$\of$ might have been buoyed up by theorem~\ref{Theorem.IfInclusionSameThenInIsIsomorphic} and corollary~\ref{Corollary.GBCSameOfIsomorphicIn}, which seem to suggest that perhaps~$\of$-based set-theoretic mereology might be able to identify the isomorphism class of the~$\in$ relation. After all, many other non-set-theory-based foundations of mathematics, such as those originating in category theory, weave the philosophy of mathematical structuralism into the foundational theory, and for these theories it is emphasized that one shouldn't necessarily be able to define a mathematical structure such as~$\in$ directly, but rather merely identify mathematical structure up to isomorphism. Theorem~\ref{Theorem.IfInclusionSameThenInIsIsomorphic} shows that any two class relations~$\in$ and $\in^*$ with the same~$\of$ relation, provided that $\in^*$ is a class with respect to~$\in$, are isomorphic, and thus as in corollary~\ref{Corollary.GBCSameOfIsomorphicIn}, it is correct to say as an internal matter of \Godel-Bernays set theory, that~$\in$ is up to isomorphism the only class relation that forms a model of set theory and gives rise to the actual~$\of$ relation. This is a sense in which~$\of$ knows about~$\in$ up to isomorphism.

But our more considered view is that this is not the same as saying that~$\of$ determines the~$\in$-isomorphism class of the universe, and the situation is clarified by theorem~\ref{Theorem.DifferentTheoriesSameOf}, resolving question~\ref{Question.NonIsomorphicInButSameSubset}. The fact of the matter is that knowledge of the inclusion relation~$\of$ tells you almost nothing about the~$\in$-isomorphism class of the universe in which it arises. Indeed, theorem~\ref{Theorem.DifferentTheoriesSameOf} shows you can learn very little even about the~$\in$-theory of the universe from considering only~$\of$, since any two consistent set theories can have respective models on the same universe of sets with the same~$\of$ relation, even if otherwise they are extremely different on set-theoretic matters. For example, the continuum hypothesis can hold in one model and fail in another, even when those models have exactly the same objects and the same~$\of$ relation (but different~$\in$ relations). Similarly, the models can disagree on other set-theoretic issues, and corollary~\ref{Corollary.HFElementary} shows even that one cannot tell if there are infinite sets or not by looking only at the theory of~$\of$. At bottom, the conclusion seems inescapable that the inclusion relation~$\of$ knows very little set theory.

Nevertheless, what we've said so far does not actually seem decisively to rule out $\of$-based set theory as a foundation, because there is no requirement that~$\of$ need to capture the actual~$\in$-truth of a model of set theory in which it might arise. Rather, in foundations we can be free to find some other faithful simulation of mathematical structure.

So finally, let us come to theorem~\ref{Theorem.MereologyIsDecidable}, in which we notice that~$\of$-based set-theoretic mereology is an atomic unbounded relatively complemented distributive lattice and prove that this is a finitely axiomatizable, complete theory, which is therefore also decidable. This, on our view, is devastating for this formulation of mereology as foundational. We base this view on the following principle, which we should like now to discuss in further detail.

\newtheorem*{principle}{Nondecidability requirement}
\begin{principle}
 If a theory is decidable, then it cannot serve as a foundation of mathematics.
\end{principle}

If this principle is correct, then since theorem~\ref{Theorem.MereologyIsDecidable} shows that the particular formulation of set-theoretic mereology via the inclusion relation~$\of$ is a decidable theory, we assert that this formulation of mereology cannot serve as a foundation of mathematics. And a similar argument will apply to other formulations of mereology, if they lead to a decidable theory. For example, Lewis~\cite{Lewis1991:PartsOfClasses} considers a proper-class-based formulation of set-theoretic mereology that appears to result in an infinite atomic Boolean algebra, which by Tarski's analysis has a decidable theory (as in ~\cite[theorem~6.20]{Poizat2000:ACourseInModelTheory}, which is the Boolean-algebra analogue of theorem~\ref{Theorem.QuantifierElimination}). So using only the $\of$ relation in that formulation of set-theoretic mereology would seem similarly to be inadequate as a foundation. (But Lewis also considers singletons, and in light of theorem~\ref{Theorem.MereologyWithSingletonOperator}, allowing the singleton operator would recover $\in$ and therefore be foundationally robust.)

So let us conclude this paper by discussing the grounds that one might have for the non-decidability requirement. Part of what it means for a theory to be foundational is that one might find faithful representations of the principal mathematical structures within that theory. For example,~$\in$-based set theory is commonly taken as a possible foundation of mathematics, because set theorists have observed that one can find within set theory seemingly faithful copies of all the usual mathematical structures considered in mathematics, and we can formalize analogues of all the usual mathematical procedures that mathematicians might employ in connection with those structures. In set theory, we have a way of talking about ordered pairs and functions and relations and orders and we can build a copy of the natural numbers and the rational field and we can build a complete ordered field and so on. For a theory to be acceptable as a foundation of mathematics, it must similarly be able to find faithful representations for all the usual mathematical structures. In particular, it must have a way of representing the natural numbers, so that for any particular natural number $n$, we would have a corresponding way of referring to the number $n$ in the theory, and a way of moving from the representation of $n$ to that of $n+1$ and so on with the other arithmetic structure. Similarly, we would have representations of finite combinatorial objects, including the operation, say, of Turing machines.

Suppose that we have a theory $T$ that is foundational in the sense described in the previous paragraph, but also decidable, so that we have a decision procedure for determining whether a given statement is provable from $T$ or not. We do not assume that the theory $T$ is sound, although the representation of arithmetic will mean that it is $\Delta_0$-sound. Working in the meta-theory, where the decision procedure for $T$ exists, let $A$ and $B$ be a computably inseparable pair of computably enumerable sets. (For this argument, kindly allow us to assume that the meta-theory is sufficiently strong to deal with computable enumerations and the existence of such sets, or rather, the existence of the programs enumerating them.) So we have particular programs $p_A$ and $p_B$ that enumerate the elements of $A$ and $B$, respectively; these sets are disjoint; and there is no computable set containing $A$ while disjoint from $B$. But in the meta-theory, consider the set $C$ consisting of those natural numbers $n$ that $T$ proves are enumerated first by $p_A$ (or only by $p_A$) in comparison with $p_B$. Since $T$ is foundational, we are able to express the operation of the Turing machines $p_A$ and $p_B$ in the theory. And furthermore, if a number $n$ is actually in $A$, then it will be enumerated by $p_A$ by some definite computation, which itself would be faithfully represented in $T$, and so $T$ will agree that $n$ is enumerated by $p_A$. Since $A$ and $B$ are disjoint (in the meta-theory), it follows that $n$ will not be enumerated by $p_B$ by any computation of the same length or shorter, and so since the operation of $p_B$ is faithfully represented in $T$ to that same length, it follows that $T$ will agree that $n$ is enumerated first by $p_A$ in comparison with $p_B$. So $n$ will be in $C$, and thus $C$ contains $A$. Similarly, if $n$ is in $B$, then it is enumerated by $p_B$ by some definite computation, which is faithfully represented in $T$, along with all the shorter computations of $p_A$, and so $T$ will agree in this case that $n$ is not first enumerated by $p_A$, and so $n$ will not be in $C$. So $C$ contains $A$ and is disjoint from $B$. Finally, since $T$ is decidable, it follows that $C$ is computable, and so it is a computable separation of $A$ and $B$, contrary to our choice of $A$ and $B$ to be computably inseparable. So this seems to provide grounds for the non-decidability requirement.

One might summarize the argument as the following elementary fact, applied in the meta-theory: if a theory $T$ can formalize arithmetic and is $\Delta_0$-sound, then it cannot be decidable, because if it were, we would be able to find computable separations of computably inseparable sets, which is a contradiction.

For one final remark, let us highlight a subtle aspect of the argument we have just given, particularly the distinction between arguing in the meta-theory in comparison with the object theory, by considering how the analysis works in the confounding case of the theory $T=\ZFC+\neg\Con(\ZFC)$. If \ZFC\ is consistent, then the incompleteness theorem shows that $T$ also is consistent. If we regard \ZFC\ as capable of providing a foundation of mathematics in the sense described in the previous paragraph, then it would seem that $T$ also, being a consistent extension of a foundational theory, would similarly be capable of providing a foundation of mathematics. For if in \ZFC\ we can prove the existence of mathematical objects and structures that faithfully represent our usual mathematical structures, then $T$ being a stronger theory can also prove the existence of these structures and more. Although many set theorists regard $T$ as unsound, because it asserts $\neg\Con(\ZFC)$, which is to say, $T$ asserts the existence of a certain finite combinatorial object, the proof of a contradiction in \ZFC, which we don't expect to find in the meta-theory, nevertheless, this by itself doesn't seem to prevent $T$ from being foundational. One can easily see that $T$ is not a decidable theory, using a computably inseparable pair of computably enumerable sets, as in the previous paragraph. But the confounding thing to notice, here, is that while $T$ is not decidable in the meta-theory, it actually is decidable within the object theory of $T$ itself. That is, externally, we think $T$ is not decidable, but internally, arguing in $T$ itself, we think that \ZFC\ and hence also $T$ is inconsistent and therefore decidable, because $T$ thinks that everything is provable from $T$. Our view of this example is that it is not a counterexample to the non-decidability principle. The theory $T$ is foundational, and not decidable in the meta-theory, even though $T$ itself thinks that $T$ is decidable. Ultimately, we may regard $T$ as capable of serving as a foundation, if an unsound one, because in the meta-theory we do not actually assert $T$ and we recognize that it is consistent although unsound. But a meta-theoretical context in which $T$ is asserted would not be able to regard $T$ as foundational, since it would look upon $T$ as inconsistent.

\bibliographystyle{alpha}
\bibliography{HamkinsBiblio,MathBiblio,WebPosts}

\end{document}